\newtheorem{theorem}{Theorem}[section]
\newtheorem{prop}[theorem]{Proposition}
\newtheorem{conj}[theorem]{Conjecture}
\newtheorem{lemma}[theorem]{Lemma}
\newtheorem{cor}[theorem]{Corollary}
\newtheorem{fact}[theorem]{Fact}
\newtheorem{question}{Question}
\theoremstyle{definition}
\newtheorem*{defn-non}{Definition}
\newlist{Case}{enumerate}{2}
\setlist[Case, 1]{%
    label           =   {\bfseries Case \arabic*.},
    labelindent=1em ,labelwidth=1.3cm, labelsep*=1em, leftmargin =!
}
\setlist[Case, 2]{%
    label           =   {\bfseries Subcase \arabic{Casei}.\arabic*.},
    labelindent=-1em ,labelwidth=1.3cm, labelsep*=1em, leftmargin =!
}
\newcommand{\induced}{\mathrm{induced}}
\title{Exact Tur\'{a}n densities in triple systems } 
\author{
Nannan Chen\thanks{School of Mathematics, Shangdong University, Jinan, China, and Extremal Combinatorics and Probability Group (ECOPRO), Institute for Basic Science (IBS), Daejeon, South Korea. Supported by  China Scholarship Council and IBS-R029-C4. 
Emails: nannanchen@sdu.edu.cn, yzq\_sdu\_edu@163.com.
}
\quad\quad
Yuzhen Qi\footnotemark[1]
\and
Caihong Yang\thanks{School of Mathematics and Statistics, Fuzhou University, Fujian,
China, and Extremal Combinatorics and Probability Group (ECOPRO), Institute for Basic Science (IBS), Daejeon, South Korea.  Supported by National Key R\&D Program of China (Grant No. 2023YFA1010202), the Central Guidance on Local Science and Technology Development Fund of Fujian Province (Grant No. 2023L3003), the Institute for Basic Science (IBS-R029-C4). 
Email: chyang.fzu@gmail.com.}
\and
Hongbin Zhao\thanks{School of Mathematics and Statistics, Fuzhou University, Fujian,
China. Supported by National Key R\&D Program of China (Grant No. 2023YFA1010202), the Central Guidance on Local Science and Technology Development Fund of Fujian Province (Grant No. 2023L3003). Email: hbzhao2024@163.com.}
}
\begin{document}
\date{}
\maketitle
\begin{abstract}
In this paper, we prove several new Tur\'{a}n density results for $3$-graphs. We show:
\begin{center}
     $\pi(C_4^3, \overline{F_5}) = 2\sqrt{3}-3$, $\pi(F_{3,2}, C_5^{3-}) = \frac{2}{9}$ and $\pi(F_{3,2}, \text{induced  }\overline{F_{3,2}}) = \frac{3}{8}$. 
\end{center}
The first result confirms the conjecture of Shi [On Tur\'an denisties of small triple graphs, European J. Combin. 52 (2016) 95–102]. The other  results give several special non-principal family posed by Mubayi and R\"odl [On the Tur\'an number of triple systems, J. Combin. Theory A. 100 (2002) 135-152]. 

\medskip
    \textbf{Keywords:} Hypergraphs; Tur\'an density; Non-principal 
\end{abstract}

\section{Introduction}\label{introduction}
For a collection $\mathcal{F}$ of $r$-uniform hypergraphs ($r$-graphs), we say that an $r$-graph $H$ is \emph{$\mathcal{F}$-free} or free of $\mathcal{F}$, if $H$ contains no $\mathcal{F} \in \mathcal{F}$ as a subhypergraph. The \emph{Tur\'{a}n number} $\operatorname{ex}(n, \mathcal{F})$ is defined to be the maximum number of $r$-edges an $n$-vertex $\mathcal{F}$-free $r$-graph can have. Identifying $\operatorname{ex}(n, \mathcal{F})$ is a central problem in Extreme Combinatorics, and is also notoriously difficult when $r \geqslant 3$, where even the \emph{Turán density} $\pi(\mathcal{F}):=\lim _{n \rightarrow \infty} \operatorname{ex}(n, \mathcal{F}) /\binom{n}{r}$ is only known for a few $\mathcal{F}$'s despite significant effort devoted to this area. 

A fundamental case is when $\mathcal{F} =K_t^r$, the complete $r$-graph on $t$ vertices. 
In 1941, Tur\'an \cite{turan1941on} initiated the study of this problem  by asking: what is the maximum number of edges in an $n$-vertex $r$-graph  that does not contain a complete $r$-graph $K_t^{r}$ on $t$ vertices? 
For $r=2$, Tur\'an's theorem \cite{turan1941on} provides a complete solution. However, for $t>r\geq 3$, this problem remains open. Even the first nontrivial case of $K_4^3$ (i.e., $r=3$ and $t=4$) has yet to be resolved. Many constructions (e.g., see \cite{Brown_1983, Kostochka_1982, Fon_1988}) are known to establish the lower bound $\pi(K_4^3) \ge \frac{5}{9}$. On the other hand, the current record is $\pi(K_4^3) \le 0.5615$ \cite{baber2012turan}. For further results on Tur\'an density, we refer the reader to the survey by Balogh, Clemen and Lidick\'{y} \cite{Felix_2022} and paper by Ravry and Vaughan \cite{Victor_2013}.

 A related family that has also received extensive attention for Tur\'an density problems is the tight cycle. The $3$-uniform tight $\ell$-cycle $C_{\ell}^{3}$ is the $3$-graph on $\{1, \ldots, \ell\}$ consisting of all $\ell$ consecutive triples in the cyclic order. Denote by $C_{\ell}^{3-}$the 3-uniform hypergraph obtained by removing one hyperedge from the tight cycle on $\ell$ vertices. Balogh and Luo \cite{Balogh_Luo_2024} showed that the Tur\'an density of $C_{\ell}^{3-}$ is $1 / 4$ for every sufficiently large $\ell$ not divisible by 3. 
 Very recently, Lidick\'y-Mattes-Pfender \cite{LMP24}, and independently Bodn\'ar-Le\'on-Liu-Pikhurko \cite{XZLiu_2024} proved that $\pi(C^{3-}_\ell) = 1/4$ for every $\ell \ge 5$ satisfying $ \ell\not\equiv 0 (\text{mod }3)$.
 Let $\mathcal{C}$ be either the pair $\left\{C_{4}^{3}, C_{5}^{3}\right\}$ or the single tight $\ell$-cycle $C_{\ell}^{3}$ for some $\ell \geq 7$ not divisible by 3. Very recently, Bodn\'{a}r, Le\'{o}n, Liu and Pikhurko \cite {bodnar2025} showed that the Tur\'{a}n density of $\mathcal{C}$ is equal to $2 \sqrt{3}-3$. 

Given a hypergraph $H$, we use $\overline{H}$ to denote its complement.
Let $F_5$ be a 3-graph defined on $[5]$ with $E(F_5) = \{123, 145, 245\}$. Since $C_5^3\subseteq \overline{F_5}$, it follows from the result in \cite {bodnar2025} that 
$\pi(C_4^3, \overline{F_5}) \ge 2\sqrt{3}-3$. In \cite{SHI2016}, Shi proved an upper bound of $\pi(C_4^3, \overline{F_5})$, i.e.,  $\pi(C_4^3, \overline{F_5}) \le 2-\sqrt{2}$. 
Note that a $\{C_4^3, \overline{F_5} \}$-free triple graph is exactly a $C_4^3$-free triple graph with every five vertices spanning at most six edges. Based on this structure, Shi \cite{SHI2016} conjected that the upper bound could be improved to $2\sqrt{3}-3$. Motivated by the conjecture and the method developed in \cite{bodnar2025} , the main result of this paper confirms Shi’s prediction.  

\begin{theorem}\label{thm:main}
    $\pi(C_4^3, \overline{F_5}) = 2\sqrt{3}-3$. 
\end{theorem}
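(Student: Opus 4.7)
The lower bound $\pi(C_4^3, \overline{F_5}) \ge 2\sqrt{3}-3$ is already recorded in the introduction: since $C_5^3$ embeds as a subhypergraph of $\overline{F_5}$, every $\{C_4^3, C_5^3\}$-free 3-graph is also $\{C_4^3, \overline{F_5}\}$-free, and the Bodn\'ar--Le\'on--Liu--Pikhurko~\cite{bodnar2025} extremal construction witnessing $\pi(\{C_4^3, C_5^3\}) = 2\sqrt{3}-3$ supplies the matching lower bound here. The substance of Theorem~\ref{thm:main} is thus the upper bound $\pi(C_4^3, \overline{F_5}) \le 2\sqrt{3}-3$, improving Shi's previous bound of $2-\sqrt{2}$.

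My plan is to follow the strategy of~\cite{bodnar2025}, with the $C_5^3$-free hypothesis replaced by the strictly weaker local condition highlighted by Shi: in a $\{C_4^3, \overline{F_5}\}$-free 3-graph, every five vertices span at most six triples. Let $H$ be an $n$-vertex such 3-graph. Because $C_4^3 = K_4^3$, every link graph $L_H(v)$ is triangle-free, and Mantel's theorem alone only yields the weak bound $\pi(H) \le 1/2$; the gap down to $2\sqrt{3}-3$ must come from the $\overline{F_5}$-condition.

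I intend to proceed in three steps. (i) Verify the local equivalence via a short inclusion-exclusion/covering argument: a $C_4^3$-free 3-graph on $5$ vertices attains the maximum of $7$ edges only when it is isomorphic to $\overline{F_5}$, so the Shi condition ``at most $6$ triples on every $5$-set'' is exactly the $\overline{F_5}$-free constraint. (ii) Translate this restriction into joint constraints on the triangle-free links: for instance, whenever a 4-subset $\{u_1,u_2,u_3,u_4\}$ induces a $K_{2,2}$ in $L_H(v)$, the induced 3-graph $H[\{u_1,u_2,u_3,u_4\}]$ must have at most two edges, together with analogous statements tying together $L_H(u)$, $L_H(v)$ and the codegree $d_H(uv)$. (iii) Feed these constraints into the averaging/Lagrangian-style inequality developed in~\cite{bodnar2025}, ideally through a stability statement showing that any near-extremal $H$ is close to the Bodn\'ar--Le\'on--Liu--Pikhurko extremal 3-graph, which is $C_5^3$-free, so that their density bound $2\sqrt{3}-3$ applies directly.

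The main technical difficulty, I expect, lies in step (ii). The $\overline{F_5}$-free hypothesis is genuinely weaker than $C_5^3$-free here: any 6-edge 3-graph on 5 vertices is automatically $\overline{F_5}$-free (since $\overline{F_5}$ has 7 edges), while some such 6-edge patterns do contain $C_5^3$. Hence the inequalities of~\cite{bodnar2025} that rely directly on $C_5^3$-freeness cannot be invoked as a black box, and I will need a case analysis of the 6-edge, $C_4^3$-free 3-graphs on 5 vertices to verify that the extra admissible patterns do not allow the global density to exceed $2\sqrt{3}-3$. A stability-type reduction to the $\{C_4^3, C_5^3\}$-free problem already solved in~\cite{bodnar2025} appears to be the most natural way to absorb this additional slack.
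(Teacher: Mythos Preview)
Your lower bound is correct and matches the paper. The upper bound, however, is only a plan, and the plan misidentifies the engine that actually drives the argument.

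The method of~\cite{bodnar2025}, and the one the paper adopts, is not an ``averaging/Lagrangian-style inequality'' that one can feed link-graph constraints into by hand. It is a recursive bipartition argument powered by three computer-verified flag algebra statements, rerun here for the family $\{C_4^3,\overline{F_5}\}$: (a) a crude numerical upper bound $\pi(C_4^3,\overline{F_5})\le \alpha$ with $\alpha$ only slightly above $2\sqrt{3}-3$; (b) a max-cut lemma guaranteeing that any near-extremal $H$ has a large semi-bipartite subgraph $H\cap\mathbb{B}[V_1,V_2]$; and (c) the crucial inequality $|B|\le \tfrac{3999}{4000}|M|+o(n^3)$ relating bad and missing edges across a locally maximal cut. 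These three together give a partition with $|H|\le \binom{|V_1|}{2}|V_2|+|H[V_2]|+\xi n^3$, and then a one-line optimisation over $x_1=|V_1|/n$ using the elementary inequality $\frac{3x_1^2x_2}{1-x_2^3}\le 2\sqrt{3}-3$ finishes the proof by recursion into $V_2$.

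None of your steps (ii)--(iii) produces any of (a)--(c). Your hope of a ``stability-type reduction to the $\{C_4^3,C_5^3\}$-free problem'' is not what the paper does and is not obviously viable: you correctly observe that there exist $K_4^3$-free $5$-vertex patterns with six edges that contain $C_5^3$, so a near-extremal $\{C_4^3,\overline{F_5}\}$-free $H$ need not be close to $C_5^3$-free, and you give no mechanism to remove such local copies of $C_5^3$ without losing density. The paper sidesteps this entirely by obtaining fresh SDP certificates directly in the $\{C_4^3,\overline{F_5}\}$-free theory; the point is that the flag algebra framework accommodates the weaker $\overline{F_5}$-constraint just as well, so no reduction is needed. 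Your case analysis of $5$-vertex configurations would at best be an ingredient in setting up those SDPs, not a replacement for them.
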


In~\cite{Mubayi_Rodl_2002}, Mubayi and R\"odl provided the following construction, which establishes the lower bound $\pi(C_\ell^3) \ge 2\sqrt{3} - 3$ for every $\ell \ge 4$ not divisible by 3. This construction also yields the same lower bound for the Tu\'an density of $\{C_4^3, \overline{F_5}\}$. 

We say that a 3-graph $H$ is \emph{semi-bipartite} if there exists a partition $V_1 \cup V_2 = V(H)$ such that every edge in $H$ contains exactly two vertices from $V_1$. Denote by $\mathbb{B}[V_1, V_2]$ the \emph{complete semi-bipartite} 3-graph with parts $V_1, V_2$, i.e.,
$$
\mathbb{B}[V_1, V_2] := \left\{ e \in \binom{V_1 \cup V_2}{3} : |e \cap V_1| = 2 \right\}.
$$

\textbf{Lower bound.} For $n \in \{0,1,2\}$, the (unique) $n$-vertex $\mathbb{B}_{\text{rec}}$-\emph{construction} is the empty 3-graph on $n$ vertices. For $n \ge 3$, an $n$-vertex 3-graph $H$ is a $\mathbb{B}_{\text{rec}}$-\emph{construction} if there exists a partition $V_1 \cup V_2 = V(H)$ into non-empty parts such that $H$ is obtained from $\mathbb{B}[V_1, V_2]$ by adding a copy of $\mathbb{B}_{\text{rec}}$-construction into $V_2$. Additionally, a 3-graph is called a $\mathbb{B}_{\text{rec}}$-\emph{subconstruction} if it is a subgraph of some $\mathbb{B}_{\text{rec}}$-construction.

Let $b_{\text{rec}}(n)$ denote the maximum number of edges in an $n$-vertex $\mathbb{B}_{\text{rec}}$-construction. By definition, for each $n \ge 3$, we have
$$b_{\text{rec}}(n) = \max \left\{ \binom{n_1}{2} n_2 + b_{\text{rec}}(n_2) : n_1 + n_2 = n \text{ and } n_1 \ge 1 \right\}.$$

Simple calculations show that, as $n \to \infty$, the optimal ratio $n_1 / n$ approaches $\frac{3 - \sqrt{3}}{2} + o(1)$, and 
$$b_{\text{rec}}(n) = (2\sqrt{3} - 3 + o(1)) \binom{n}{3}.$$ 
It is easy to verify that every $\mathbb{B}_{\text{rec}}$-construction is $\{C_4^3, \overline{F_5}\}$-free. Therefore, this construction gives a lower bound $\pi(C_4^3, \overline{F_5}) \ge 2\sqrt{3} - 3$.\\ 
\vspace{-3mm}

\textbf{Overview of the upper bound of \cref{thm:main}.} The proof employs flag algebra machinery developed by Razborov \cite{flag_2007} and is computer-assisted.

The strategy begins by establishing a structural property of dense $\{C_4^3, \overline{F_5}\}$-free 3-graphs (see Lemma~\ref{lem:key}). 
 Specifically,  every sufficiently dense $\{C_4^3, \overline{F_5}\}$-free 3-graph $H$ admits a bipartition $V_1 \cup V_2$ with $\frac{n}{2} \le |V_1| \le \frac{4n}{5}$ such that the total number of edges is at most $\binom{|V_1|}{2}|V_2| + |H[V_2]| + \xi n^3$,
up to a penalty term depending on the number of irregular edges across the partition. This structural insight plays a key role in obtaining the upper bound of Theorem~\ref{thm:main}.

\textbf{Organisation.} 
The rest of the paper is  organized as follows. 
In \cref{sec:proof of main},  we introduce a useful lemma (\cref{lem:key}) and provide the proof of \cref{thm:main}. \cref{sec:Flag} presents the results obtained via computer using Razborov’s flag algebra method, which will be used to prove \cref{lem:key}.  The proof of \cref{lem:key} is given in \cref{sec:key}. Finally, we discuss non-principal families  and pose some open questions in \cref{sec:other results}.

\section{Proof of \cref{thm:main}}\label{sec:proof of main}

To state Lemma~\ref{lem:key}, we first introduce a definition related to a 3-graph $H$. 
For a partition $V(H)=V_1 \cup V_2$, let
\begin{equation}\label{BM}
    \begin{aligned}
    B_{H}\left(V_1, V_2\right) & :=\left\{e \in H:\left|e \cap V_1\right| \in\{1,3\}\right\},  \\
    M_{H}\left(V_1, V_2\right) & :=\left\{e \in \overline{H}:\left|e \cap V_1\right|=2\right\} . 
    \end{aligned}
\end{equation}
be the sets of \emph{bad} and \emph{missing} edges respectively (see Figure \ref{fig:1}). Note that the edges inside $V_2$ are not classified as bad or missing. For convenience, we will omit $(V_1, V_2)$ and the subscript $H$ if it is clear from the context.\\ 

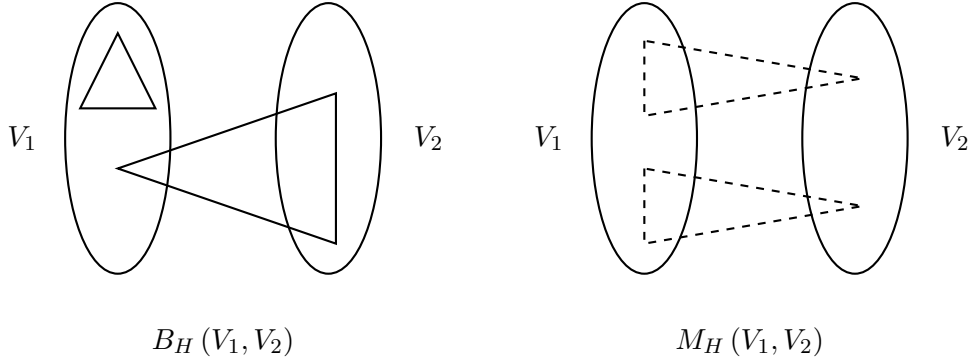
\begin{figure}[htbp]
\centering 
\begin{tikzpicture}
\draw [rotate around={90:(1.6,4.2)},line width=0.8pt] (1.6,4.2) ellipse (1.8cm and 0.7cm);
\draw [rotate around={90:(4.4,4.2)},line width=0.8pt] (4.4,4.2) ellipse (1.8cm and 0.7cm);
\draw (0,4.5) node[anchor=north west] {$V_1$};
\draw (5.4,4.5) node[anchor=north west] {$V_2$};
\draw (3,1.5) node[anchor=center] {$ B_{H}\left(V_1, V_2\right)$};

\draw[thick, fill=none]
    (1.6,5.6) -- (1.1,4.6) -- (2.1,4.6) -- cycle;
\draw[thick, fill=none]
(1.6,3.8) -- (4.5,4.8) -- (4.5,2.8) -- cycle;

\draw [rotate around={90:(8.6,4.2)},line width=0.8pt] (8.6,4.2) ellipse (1.8cm and 0.7cm);
\draw [rotate around={90:(11.4,4.2)},line width=0.8pt] (11.4,4.2) ellipse (1.8cm and 0.7cm);

\draw (7,4.5) node[anchor=north west] {$V_1$};
\draw (12.4,4.5) node[anchor=north west] {$V_2$};
\draw (10,1.5) node[anchor=center] {$ M_{H}\left(V_1, V_2\right)$};

\draw[dashed, thick, fill=none]
(8.6,5.5) -- (8.6,4.5) -- (11.5,5) -- cycle;

\draw[dashed,  thick, fill=none]
(8.6,3.8) -- (8.6,2.8) -- (11.5,3.3) -- cycle; 
\end{tikzpicture}
\caption{$ B_{H}\left(V_1, V_2\right)$ and $ M_{H}\left(V_1, V_2\right)$. The dashed edge indicates that it belongs to $\overline{H}$. } 
    \label{fig:1}
\end{figure}

The following fact is straightforward to verify. Here, 
$$
\mathbb{S}^2:=\left\{\left(x_1, x_2\right) \in \mathbb{R}^2: x_1+x_2=1 \text { and } x_i \geq 0 \text { for } i \in[2]\right\}
$$ 
is the standard \emph{$1$-dimensional simplex}.

\begin{fact}[\cite{bodnar2025}]\label{fact:dimensional-simplex}
The following inequalities hold for every $\left(x_1, x_2\right) \in \mathbb{S}^2$ with $x_2<1$:
\begin{enumerate}[(1)]
      \item $\frac{x_1^2 x_2}{2\left(1-x_2^3\right)} \leq \frac{2\sqrt{3}-3}{6}$.

    \item Suppose that $x_1 \in\left[\frac{1}{2}, 1\right]$. Then

$$
\frac{1}{2} x_1^2 x_2+\frac{2\sqrt{3}-3}{6} x_2^3 \leq \frac{2\sqrt{3}-3}{6}-\frac{1}{4}\left(x_1-\frac{3-\sqrt{3}}{12}\right)^2.
$$   
 \end{enumerate}    
\end{fact}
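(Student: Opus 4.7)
The plan is to reduce both statements to one-variable polynomial inequalities by substituting $x_1 = 1 - x_2$, and then to exploit a perfect-square factorization in each case.

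For part (1), I would first use the identity $1 - x_2^3 = (1-x_2)(1 + x_2 + x_2^2) = x_1(1 + x_2 + x_2^2)$ to cancel a factor of $x_1$ from the fraction, reducing the claim to
\[
\frac{x_2(1-x_2)}{2(1 + x_2 + x_2^2)} \leq \frac{2\sqrt{3}-3}{6}.
\]
Clearing denominators and substituting $x_1 = 1-x_2$ turns this into the quadratic inequality $2\sqrt{3}\,x_2^2 + (2\sqrt{3} - 6)\,x_2 + (2\sqrt{3} - 3) \geq 0$. A direct check shows that this quadratic has vanishing discriminant, so it equals $2\sqrt{3}\bigl(x_2 - \tfrac{\sqrt{3}-1}{2}\bigr)^2 \geq 0$. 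This also pinpoints the equality case $x_2 = \tfrac{\sqrt{3}-1}{2}$ (equivalently $x_1 = \tfrac{3-\sqrt{3}}{2}$), consistent with the optimal ratio $n_1/n \to \tfrac{3-\sqrt{3}}{2}$ appearing in the Mubayi--R\"odl construction described in the introduction.

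For part (2), I would set $F(x_1) := \tfrac{2\sqrt{3}-3}{6} - \tfrac{1}{2}x_1^2 x_2 - \tfrac{2\sqrt{3}-3}{6}\,x_2^3$, so that the claim becomes $F(x_1) \geq \tfrac{1}{4}(x_1 - c)^2$ for the constant $c$ on the right-hand side. Writing $1 - x_2^3 = x_1(1 + x_2 + x_2^2) = x_1(3 - 3x_1 + x_1^2)$ via $x_2 = 1-x_1$ pulls a factor of $x_1$ out and leaves
\[
F(x_1) = \frac{x_1}{6}\bigl[(2\sqrt{3}-3)(3 - 3x_1 + x_1^2) - 3x_1(1-x_1)\bigr].
\]
Running the same discriminant argument as in part (1), the bracketed quadratic has a double root at $x_1 = \tfrac{3-\sqrt{3}}{2}$ and factors as $2\sqrt{3}\bigl(x_1 - \tfrac{3-\sqrt{3}}{2}\bigr)^2$, yielding $F(x_1) = \tfrac{\sqrt{3}\,x_1}{3}\bigl(x_1 - \tfrac{3-\sqrt{3}}{2}\bigr)^2$. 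The remaining inequality is then a pointwise comparison of two nonnegative quadratics, which on $[\tfrac{1}{2}, 1]$ follows from $\tfrac{\sqrt{3}\,x_1}{3} \geq \tfrac{\sqrt{3}}{6} > \tfrac{1}{4}$.

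I do not foresee a serious obstacle: the whole argument is elementary algebra once one notices that the relevant quadratics have zero discriminant, so the stated bounds are in fact perfect-square identities up to multiplication by a positive factor. The only points to verify carefully are the discriminant computations in the two parts, which are routine arithmetic.
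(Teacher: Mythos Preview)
The paper does not supply its own proof of this fact; it is quoted verbatim from~\cite{bodnar2025}, so there is no argument in the paper to compare yours against. Your treatment of part~(1) is correct: the cancellation of $x_1$ and the zero-discriminant computation both check out, and the identification of the equality case $x_1=\tfrac{3-\sqrt{3}}{2}$ is right.

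For part~(2), your derivation of the identity
\[
F(x_1)=\frac{\sqrt{3}\,x_1}{3}\Bigl(x_1-\tfrac{3-\sqrt{3}}{2}\Bigr)^{2}
\]
is also correct. The gap is in the last sentence. Your ``pointwise comparison'' via $\tfrac{\sqrt{3}\,x_1}{3}\ge \tfrac{\sqrt{3}}{6}>\tfrac14$ tacitly assumes that the square on the right-hand side of the stated inequality has the \emph{same} centre $\tfrac{3-\sqrt{3}}{2}$ as the square you produced. But the statement as printed has centre $\tfrac{3-\sqrt{3}}{12}$, and with that constant the inequality is simply false: at $x_1=\tfrac{3-\sqrt{3}}{2}$ one has $F(x_1)=0$, i.e.\ the left-hand side equals $\tfrac{2\sqrt{3}-3}{6}$, while the right-hand side equals $\tfrac{2\sqrt{3}-3}{6}-\tfrac14\bigl(\tfrac{5(3-\sqrt{3})}{12}\bigr)^{2}<\tfrac{2\sqrt{3}-3}{6}$. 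This is a typo in the paper (the intended centre is $\tfrac{3-\sqrt{3}}{2}$, consistent with the equality case of part~(1) and with the optimal ratio in the $\mathbb{B}_{\mathrm{rec}}$-construction); with that correction your coefficient comparison is valid and the proof is complete. You should state the value of $c$ explicitly and flag the discrepancy rather than leave it implicit.
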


The following lemma will be crucial for the proof.

\begin{lemma}\label{lem:key}
    For every $\xi>0$, there exist $\delta_{\ref{lem:key}}=\delta_{\ref{lem:key}}(\xi)>0$ and $N_{\ref{lem:key}}=N_{\ref{lem:key}}(\xi)$ such that the following holds for all $n \geq N_{\ref{lem:key}}$. Suppose that $H$ is an $n$-vertex $\{C_4^3, \overline{F_5}\}$-free $3$-graph with at least $\left(\frac{\pi(C_4^3, \overline{F_5})}{6}-\delta_{\ref{lem:key}}\right) n^3$ edges. Then there exists a partition $V_1 \cup V_2=V(H)$ with $\frac{n}{2} \le |V_1| \le \frac{4n}{5}$  such that\begin{equation*}\label{equ:keylem}
        |H| \leq\binom{\left|V_1\right|}{2}\left|V_2\right|+\left|H\left[V_2\right]\right|+\xi n^3-\max \left\{\frac{|B|}{3999}, \frac{|M|}{4000}\right\},
    \end{equation*}
    where $B=B_{H}\left(V_1, V_2\right)$ and $M=M_{H}\left(V_1, V_2\right)$ are defined in (\ref{BM}).
\end{lemma}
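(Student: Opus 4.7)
The plan is to combine the flag-algebra computations of \cref{sec:Flag} with a combinatorial stability argument. I expect \cref{sec:Flag} to furnish a quantitative inequality of the shape
\[
\frac{6|H|}{n^{3}} + \sum_{F} \alpha_F \, p(F;H) \le 2\sqrt{3} - 3 + o(1),
\]
where $F$ ranges over small induced ``flag'' configurations, $p(F;H)$ denotes its induced density in $H$, and each $\alpha_F > 0$ flags a subgraph that must be rare in any $\mathbb{B}_{\text{rec}}$-subconstruction. Because $|H| \ge (\pi/6 - \delta)n^3$, the weighted sum $\sum_F \alpha_F \, p(F;H)$ must be $O(\delta)$, so $H$ is close in edit distance to a $\mathbb{B}_{\text{rec}}$-construction.

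Next, I would extract the partition from this near-extremality. In the extremal $\mathbb{B}_{\text{rec}}$-construction the two vertex classes are sharply separated by their links: a vertex of $V_1$ has link approximately equal to a complete bipartite graph and is therefore almost triangle-free, whereas a vertex of $V_2$ has link containing a $K[V_1]$-clique and is triangle-rich. Accordingly I would set $V_1$ to be the vertices whose link contains few triangles (equivalently, whose $K_4^3$-degree in $H$ is low), and $V_2$ to be the rest. Since the optimal construction has $|V_1|/n \to (3-\sqrt{3})/2 \approx 0.634$ and near-extremality forces stability around this ratio, we obtain $n/2 \le |V_1| \le 4n/5$ after relocating $o(n)$ borderline vertices to whichever side strengthens the inequality.

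With the partition fixed, the penalty term follows from forbidden-subgraph forcing. For a bad edge $e = \{a,b,c\} \subseteq V_1$ and any pair $\{d,f\} \subseteq V_2$, the six triples of $\mathbb{B}[\{a,b,c\},\{d,f\}]$ together with $e$ already give a copy of $\overline{F_5}$ on the five vertices $\{a,b,c,d,f\}$, so at least one of those six triples must be missing. A similar $C_4^3$-based argument handles bad edges with $|e \cap V_1| = 1$. Double-counting pairs (bad edge, $V_2$-pair) against the missing edges they force yields a linear bound of the form $|B| \le c\,|M| + o(n^3)$; this is then converted into the stated max-penalty form by feeding the forced missing or bad edges back into the flag-algebra certificate as additional slack to be subtracted from the density bound.

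The main obstacle will be calibrating the explicit constants $1/3999$ and $1/4000$. The naive double-counting above yields much cruder ratios (on the order of $1/20$); sharpening them to the stated constants requires extracting slack directly from the SDP certificate rather than re-deriving it combinatorially. Concretely, one must identify, for every small flag that records a bad or missing edge incident to the partition, its Cauchy--Schwarz coefficient in the certificate, verify that the certificate remains non-negative after subtracting $|B|/3999$ (respectively $|M|/4000$) from the left-hand side of the density inequality, and then aggregate across all relevant flags. This is the delicate quantitative step where flag algebra is used not as a black box but as an explicit, perturbable dual certificate, in the spirit of the Bodn\'ar--Le\'on--Liu--Pikhurko argument for $\pi(\mathcal{C})$ cited in the introduction.
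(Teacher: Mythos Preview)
Your plan diverges from the paper's proof in two structural ways, and one of them is an actual gap.

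\textbf{The partition.} You propose to define $V_1$ as the set of vertices whose link is nearly triangle-free. The paper does something quite different: after deleting the $o(n)$ low-degree vertices via \cref{fact:low-degree}, it takes the partition $U_1\cup U_2$ that \emph{maximises} $|G\cap\mathbb{B}[U_1,U_2]|$. This is not cosmetic. The flag-algebra statement that delivers the constants (\cref{pr:3}) has as a hypothesis that the partition be \emph{locally maximal}; your link-based partition has no reason to satisfy this, so \cref{pr:3} would not apply to it. The size window $n/2\le|V_1|\le 4n/5$ is then obtained not from stability but from the lower bound $|G\cap\mathbb{B}[U_1,U_2]|\ge\alpha_{\ref{prop:2}}\hat n^3/6$ of \cref{prop:2} combined with the trivial upper bound $\binom{|U_1|}{2}|U_2|$, which pins $|U_1|/\hat n$ to $[0.52,0.79]$ by a one-variable calculation.

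\textbf{The forcing argument.} Your claimed combinatorial step is incorrect. If $e=\{a,b,c\}\subseteq V_1$ and all six triples of $\mathbb{B}[\{a,b,c\},\{d,f\}]$ are present, the three non-edges on $\{a,b,c,d,f\}$ are $adf,\,bdf,\,cdf$, a 3-graph with degree sequence $(3,3,1,1,1)$. But $F_5=\{123,145,245\}$ has degree sequence $(2,2,2,2,1)$, so this 7-edge configuration is \emph{not} $\overline{F_5}$. No $\overline{F_5}$ is created here, and the double-counting you propose does not get off the ground.

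\textbf{The constants.} In the paper there is no perturbation of an SDP certificate at all. The single inequality $|B|-\tfrac{3999}{4000}|M|\le\xi n^3$ is the content of \cref{pr:3}, a self-contained flag-algebra proposition (taken essentially verbatim from Bodn\'ar--Le\'on--Liu--Pikhurko) whose hypotheses are exactly: bounded degree spread, large semi-bipartite cut, and local maximality of the partition. Once that inequality is in hand, the two forms with $|B|/3999$ and $|M|/4000$ follow by one line of algebra each, and the passage from $(U_1,U_2)$ back to $(V_1,V_2)$ costs only $|Z|\binom{n}{2}\le\delta^{1/2}n^3/2$.
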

See \cref{sec:key} for the proof of \cref{lem:key}.

Now we are ready to prove our first main result, \cref{thm:main}.

\begin{proof}[Proof of \cref{thm:main}] 
    
    Let $\beta:=\pi(C_4^3, \overline{F_5})$.  Fix an arbitrarily small $\xi>0$. Let $\delta_{\ref{lem:key}}=\delta_{\ref{lem:key}}(\xi)>0$ be the constant given by Lemma~\ref{lem:key}. By reducing $\delta_{\ref{lem:key}}$ if necessary, we may assume that $\delta_{\ref{lem:key}} \leq \xi$. Let $n$ be large enough that

\begin{equation}\label{mainthm:eq1}
\left|\operatorname{ex}(N, \{C_4^3, \overline{F_5}\})-\beta \frac{N^3}{6}\right| \leq \delta_{\ref{lem:key}} \frac{N^3}{6} \leq \xi \frac{N^3}{6}, \quad \text {for every }  N \geq \frac{n}{5}.
    \end{equation}
    
Now consider an extremal example $H$ for $\{C_4^3, \overline{F_5}\}$ on $n$ vertices. That is, $H$ is an $n$-vertex $\{C_4^3, \overline{F_5}\}$-free $3$-graph with $\operatorname{ex}(n, \{C_4^3, \overline{F_5}\})$ edges. 
Since $|H|>\left(\frac{\beta}{6}-\delta_{\ref{lem:key}}\right) n^3$, \cref{lem:key} states that there exists a partition $V_1 \cup V_2=V(H)$ with $\frac{n}{2} \le |V_1| \le \frac{4n}{5}$  such that 
\begin{equation}\label{mainthm:eq2}
        |H| \leq\binom{\left|V_1\right|}{2}\left|V_2\right|+\left|H\left[V_2\right]\right|+\xi n^3-\max \left\{\frac{|B|}{3999}, \frac{|M|}{4000}\right\},
    \end{equation}
where $B=B_{H}\left(V_1, V_2\right)$ and $M=M_{H}\left(V_1, V_2\right)$ were defined in (\ref{BM}).   Let $x_i:=\left|V_i\right| / n$ for $i \in[2]$. It follows from (\ref{mainthm:eq1}) and (\ref{mainthm:eq2}) that
    $$(\beta-\xi) \frac{n^3}{6} \leq|H| \leq\binom{\left|V_1\right|}{2}\left|V_2\right|+\left|H\left[V_2\right]\right|+\xi n^3 \leq \frac{x_1^2 x_2 n^3}{2}+(\beta+\xi) \frac{\left(x_2 n\right)^3}{6}+\xi n^3.$$
    Combining this inequality with Fact \ref{fact:dimensional-simplex} and the fact that $x_2=1-x_1 \leq 1 / 2$, we obtain
    $$\beta \leq \frac{3 x_1^2 x_2}{1-x_2^3}+\frac{\left(7+x_2^3\right) \xi}{1-x_2^3} \leq 2\sqrt{3}-3+\frac{57 \xi}{7}.$$
    Letting $\xi \rightarrow 0$, we conclude that $\pi\left(C_4^3,\overline{F_5}\right)=\beta \leq 2\sqrt{3}-3$, which proves Theorem \ref{thm:main}.
\end{proof}  

\section{Flag algebra}\label{sec:Flag}
In this section, we present the results obtained via computer using Razborov’s flag algebra method \cite{flag_2007}, as also described in \cite{BT11,SFS16,GGHLM22,Raz10}. Since this method is well-known by now, we will be very brief. In particular, we omit many definitions, referring the reader to \cite{flag_2007,Raz10} for any missing notions. Roughly speaking, a flag algebra proof using 0-flags on $m$ vertices of an upper bound $u \in \mathbb{R}$ on the given objective function $f$ consists of an identity
$$
u-f(H)=\mathrm{SOS}+\sum_{F \in \mathcal{F}_{m}^{0}} c_{F} \cdot p(F, H)+o(1)
$$
which is asymptotically true for any admissible $H$ with $|V(H)| \rightarrow \infty$. 
Here, the SOS term can be represented as a sum of squares, as described, for example, in \cite[Section 3]{Raz10}. $\mathcal{F}_{m}^{0}$ consists of all up to isomorphism 0-flags (objects of the theory without designated roots) with $m$ vertices, each coefficient $c_F \in \mathbb{R}_{\geq 0}$. $p(F, H) \in[0,1]$ is the density of $k$-subsets of $V(H)$ that span a subgraph isomorphic to $F$. If $f(H)$ can be represented as a linear combination of the densities of members of $\mathcal{F}_{m}^{0}$ in $H$, then finding the smallest possible $u$ amounts to solving a semi-definite program (SDP) with $\left|\mathcal{F}_{m}^{0}\right|$ linear constraints.

We analyzed the solutions returned by the computer, using a modified version of the SageMath package developed by Levente Bodn\'ar. This package is still under development. A short guide on how to install it and an overview of its current functionality can be found in the GitHub repository \href{https://github.com/bodnalev/sage}{https://github.com/bodnalev/sage}. The scripts that we used to generate the certificates and the certificates themselves can be found in the ancillary folder of the arXiv version of this paper or in a separate GitHub repository \href{https://github.com/HbZhao1/Flag_algebra_supplementary_files}{https://github.com/HbZhao1/Flag\_algebra\_supplementary\_files}. 

Next, we present three results (Proposition~\ref{prop:1}, \ref{prop:2} and \ref{pr:3}) that are crucial for proving \cref{lem:key}. The proofs of these results mainly rely on flag algebra techniques. Note that Proposition~\ref{prop:1}, \ref{prop:2} and \ref{pr:3} are almost identical to~\cite[Proposition 3.1, 3.2 and 3.3]{bodnar2025}.
Since the proofs are basically the same, we omit them here.

The first result states that the Tur\'an density of $\{C_4^3, \overline{F_5}\}$ is at most
$$
\alpha_{\ref{prop:1}} \coloneqq \frac{30511}{65536} \approx 0.465560913085938 \ldots  
$$
which is quite close to $2 \sqrt{3}-3 \approx 0.46410$, the value we ultimately aim to establish. 

\begin{prop}\label{prop:1}
    For every integer $n \ge 1$, every $\{C_4^3, \overline{F_5}\}$-free $n$-vertex $3$-graph $H$ has at most $\alpha_{\ref{prop:1}} \frac{n^3}{6}$ edges.
\end{prop}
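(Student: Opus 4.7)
The plan is to bound the edge density of every $\{C_4^3, \overline{F_5}\}$-free $3$-graph $H$ using Razborov's flag algebra framework, working with $0$-flags on $m$ vertices for a suitably chosen $m$. The objective is to produce an asymptotic identity
$$
\alpha_{\ref{prop:1}} - \frac{|H|}{\binom{n}{3}} = \mathrm{SOS} + \sum_{F \in \mathcal{F}_m^0} c_F \cdot p(F, H) + o(1),
$$
valid for every admissible host $H$ with $|V(H)| \to \infty$, where each $c_F \ge 0$, the SOS term is a nonnegative combination of squares of flag algebra elements, and $\mathcal{F}_m^0$ denotes all $\{C_4^3, \overline{F_5}\}$-free $3$-graphs on $m$ vertices up to isomorphism. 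Because the forbidden configurations are finite, both the edge density of $H$ and the densities $p(F, H)$ are linear functionals of the distribution of induced subgraphs on $m$ vertices, so proving the bound reduces to a (large but finite) semidefinite program.

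The first concrete step is to enumerate $\mathcal{F}_m^0$ and, for each admissible rooted type $\sigma$ on a smaller vertex set, to enumerate all admissible $\sigma$-flags of the appropriate size; together with the type-averaging operators, these data yield the matrix constraints of the SDP. The target constant $\alpha_{\ref{prop:1}} = 30511/65536$ is a dyadic rational, which indicates that the authors of \cite{bodnar2025} selected $m$ just large enough for the recursive semi-bipartite construction $\mathbb{B}_{\mathrm{rec}}$ to be resolved by the flag spectrum while keeping the SDP tractable in the SageMath package cited in the paper. I would reuse the same $m$ and the same enumeration pipeline, adjusting only the forbidden-subgraph list so that copies of $\overline{F_5}$ (in addition to $C_4^3$) are excluded when generating the flag sets.

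Next I would pass the resulting SDP to a numerical solver and obtain an approximate feasible solution with objective value close to $\alpha_{\ref{prop:1}}$. The truly delicate step is then the rounding phase: the numerical positive semidefinite matrices must be replaced by exact rational (or small algebraic-number) matrices that remain positive semidefinite, while ensuring that the induced coefficients $c_F$ are all nonnegative and that the constant term equals $\alpha_{\ref{prop:1}}$ exactly. Standard practice here is to identify which constraints should be made tight on the conjectural extremal construction (the recursive semi-bipartite $\mathbb{B}_{\mathrm{rec}}$-construction described in the lower-bound discussion), restrict the kernels of the SOS matrices accordingly, and then solve a projected feasibility problem over the rationals. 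I expect this rounding, together with controlling the size of the resulting certificate, to be the main obstacle.

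Once a rational certificate is produced, verification is mechanical: expand the right-hand side of the identity, check agreement with the left-hand side coefficient-by-coefficient modulo $o(1)$, and confirm nonnegativity of every $c_F$ and of every matrix appearing in the SOS term. Since \cite{bodnar2025} already carries out an essentially identical argument for the structurally related family $\{C_4^3, C_5^3\}$, and since $C_5^3 \subseteq \overline{F_5}$ makes the two problems tightly linked, I would reuse that infrastructure almost verbatim, only swapping in $\overline{F_5}$ as a forbidden subgraph and rerunning the certificate-generation scripts from the GitHub repository referenced in the paper.
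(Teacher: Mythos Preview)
Your proposal is correct and takes essentially the same approach as the paper: the paper omits the proof entirely, stating that it is ``almost identical'' to \cite[Proposition~3.1]{bodnar2025} and pointing to the flag-algebra certificates in the accompanying GitHub repository. Your outline of the flag-algebra SDP-plus-rounding pipeline, together with your observation that one need only swap the forbidden family from $\{C_4^3,C_5^3\}$ to $\{C_4^3,\overline{F_5}\}$ and rerun the scripts, is exactly what the paper does.
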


For an $n$-vertex $3$-graph $H$ define the \emph{max-cut ratio} to be
\begin{align*}
    \mu(H)
    \coloneqq \frac{6}{n^3} \cdot \max\left\{|H \cap \mathbb{B}[V_1, V_2]| \colon \text{$V_1, V_2$ form a partition of $V(H)$}\right\}.
\end{align*}

The following result shows that every $\{C_4^3, \overline{F_5}\}$-free $3$-graph $H$ on $n$ vertices with large minimum degree must have a large max-cut ratio, that is, a large semi-bipartite subgraph.

\begin{prop}\label{prop:2}
    Suppose that $H$ is an $n$-vertex $\{C_4^3, \overline{F_5}\}$-free $3$-graph with $\delta(H) \ge \beta_{\ref{prop:2}}\,\frac{n^2}{2}$, where
    $\beta_{\ref{prop:2}} \coloneqq \frac{4641}{10000} \le 2\sqrt{3} - 3 - 10^{-6}.$
    Then $H$ has the max-cut ratio $\mu(H)$ at least $\alpha_{\ref{prop:2}}$, where 
    \[
    \alpha_{\ref{prop:2}}\coloneqq \frac{43099222842404160803560878960795552015959}{109451433676840840832138301242637359506224} \approx 0.393774 \ldots
    \]
\end{prop}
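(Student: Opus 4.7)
The plan is to apply Razborov's flag algebra machinery with side-constraints, following the scheme of~\cite[Proposition~3.2]{bodnar2025}. I would fix a partition $V_1 \cup V_2 = V(H)$ realising the maximum in $\mu(H)$ and work in the extended theory of $\{C_4^3, \overline{F_5}\}$-free 3-graphs equipped with a 2-colouring of the vertex set, where the colours record membership in $V_1$ versus $V_2$. The objective function is the density of triples $e$ with $|e \cap V_1|=2$ that lie in $H$; this equals $\mu(H)$ asymptotically. The extremality of the chosen bipartition yields auxiliary linear inequalities: recolouring any single vertex cannot increase the count of semi-bipartite edges, which in the flag-algebra language translates into nonnegativity of certain specific 1-flag density differences.

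The minimum-degree hypothesis $\delta(H) \ge \beta_{\ref{prop:2}}\, n^2/2$ enters as a further linear constraint, namely that the density of 1-flags whose root has link-density below $\beta_{\ref{prop:2}}$ must vanish asymptotically. With these ingredients, I would run the flag-algebra SDP on 0-flags of moderate size (likely $m = 7$, matching the complexity of the constants appearing in $\alpha_{\ref{prop:2}}$), searching for a certificate of the form
\[
\mu(H) - \alpha_{\ref{prop:2}} \;=\; \mathrm{SOS} \;+\; \sum_{F \in \mathcal{F}_m^0} c_F\, p(F, H) \;+\; \lambda \cdot \Bigl(\tfrac{2\,\delta(H)}{n^2} - \beta_{\ref{prop:2}}\Bigr) \;+\; o(1),
\]
with every $c_F \ge 0$ and $\lambda \ge 0$. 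Each summand on the right is nonnegative under the hypotheses of the proposition, so the identity yields $\mu(H) \ge \alpha_{\ref{prop:2}}$ in the limit. The SDP is solved numerically via the SageMath flag-algebra package referenced in the paper, and the floating-point solution is rounded to exact rationals; this rounding accounts for the enormous denominator appearing in $\alpha_{\ref{prop:2}}$.

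The main obstacle is the rounding step: one must preserve positive semidefiniteness of the SOS matrices and nonnegativity of all coefficients $c_F$ while exactly hitting the target bound $\alpha_{\ref{prop:2}}$. A secondary subtlety is handling the "max" in the definition of $\mu(H)$ inside the flag-algebra framework: the 2-colouring must be treated as a free input rather than determined by $H$, which is standard once one works with the appropriate coloured theory and exploits the monotonicity of the chosen partition. Finally, the pool of 1-flags used to encode the min-degree constraint must be rich enough for the SDP to return a feasible rational certificate, so one may need to iterate on the flag size $m$ before the rounding succeeds.
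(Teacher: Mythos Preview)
Your proposal is correct and matches the paper's approach: the paper explicitly states that Proposition~\ref{prop:2} is almost identical to~\cite[Proposition~3.2]{bodnar2025} and omits the proof, relying on the same flag-algebra SDP with vertex-colouring and minimum-degree side constraints that you outline. The certificate is computer-generated and stored in the supplementary files, so the rounding and feasibility issues you flag are resolved by the provided scripts rather than by any additional argument in the text.
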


The key result needed for our proof is the following. We say that a partition $V(H)=V_1 \cup V_2$ of its vertex set is \emph{locally maximal} if $\left|H \cap \mathbb{B}\left[V_1, V_2\right]\right|$ does not increase when we move one vertex from one part to another. 

\begin{prop}\label{pr:3}
    For every $\xi > 0$ there exist $\delta_{\ref{pr:3}} = \delta_{\ref{pr:3}}(\xi)$ and $N_{\ref{pr:3}} = N_{\ref{pr:3}}(\xi)$ such that the following holds for all $n \ge N_{\ref{pr:3}}$.
    Let $H$ be an $n$-vertex $\{K_{4}^3,F_5\}$-free $3$-graph and $V_1 \cup V_2 = V(H)$ be a locally maximal partition.
    Suppose that $\Delta(H) - \delta(H) \le \delta_{\ref{pr:3}} n^{2}$, and $|H \cap \mathbb{B}[V_1, V_2]|\ge \alpha_{\ref{prop:2}} \frac{n^3}{6}$.
    Let  $B=B_{H}(V_1,V_2)$ and $M=M_{H}(V_1,V_2)$ be as defined in~\eqref{BM}. Then it holds that
        \begin{align}
        \label{eq:BM}
            |B|-\frac{3999}{4000}\,|M|
            \le \xi n^3.
        \end{align}
\end{prop}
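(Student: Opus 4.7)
The plan is to adapt the flag algebra argument from \cite[Proposition~3.3]{bodnar2025} to produce a computer-assisted SDP certificate for the stated inequality. I would work in the flag algebra of $\{K_4^3, F_5\}$-free 3-graphs whose vertices are additionally $2$-colored (color $1$ for $V_1$, color $2$ for $V_2$). In this algebra $|B|$ and $|M|$ are explicit linear combinations of $3$-vertex colored flag densities: $|B|$ picks up edge-flags with color pattern $V_1V_1V_1$ or $V_1V_2V_2$ (i.e.\ exactly $1$ or $3$ vertices in $V_1$), while $|M|$ picks up the non-edge on pattern $V_1V_1V_2$. Thus the target inequality is a linear statement in colored flag densities of size $3$.

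Next I would encode the two structural hypotheses as side constraints. The density hypothesis $|H \cap \mathbb{B}[V_1, V_2]| \ge \alpha_{\ref{prop:2}} \tfrac{n^3}{6}$ is directly a lower bound on the density of one specific colored $3$-vertex flag and feeds straight into the SDP. Local maximality says: for every $v \in V_1$ the number of edges through $v$ whose other two endpoints both lie in $V_1$ is at most the number of edges through $v$ with one endpoint in each part, and a symmetric statement holds for $v \in V_2$ (otherwise moving $v$ across the partition would strictly increase $|H \cap \mathbb{B}[V_1,V_2]|$). Combined with the near-regularity hypothesis $\Delta(H) - \delta(H) \le \delta_{\ref{pr:3}} n^2$, these pointwise inequalities average (with additive slack $O(\delta_{\ref{pr:3}} n^3)$) to two linear inequalities on densities of colored $1$-rooted flags; choosing $\delta_{\ref{pr:3}} \ll \xi$ absorbs the averaging error into the right-hand side $\xi n^3$.

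With all hypotheses linearised, I would run the flag algebra SDP on colored $\{K_4^3, F_5\}$-free flags of some size $m$ (the reference uses $m=7$), seeking a certificate of the shape
\begin{equation*}
    \xi\, n^3 - |B| + \tfrac{3999}{4000}\,|M| \;=\; \mathrm{SOS} \;+\; \sum_{F \in \mathcal{F}_{m}^{0}} c_F\, p(F,H)\,\tfrac{n^3}{6} \;+\; o(n^3),
\end{equation*}
where $c_F \ge 0$ and the SOS block is assembled from upward averages of squares of rooted-flag differences, subject to the two linear constraints from local maximality and the density lower bound. Because the extremal $\mathbb{B}_{\mathrm{rec}}$-construction achieves $|B| = |M| = 0$, there is slack along the extremal face and the SDP should be feasible for any $\xi > 0$ once $m$ is large enough. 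The main obstacle is the usual numerical one: because the ratio $3999/4000$ is so sharp, converting the floating-point SDP solution into an exact rational certificate whose SOS matrices are verifiably positive semidefinite requires delicate rounding and possibly restriction to a carefully chosen sub-family of flags. I would lean on the modified SageMath package linked from the paper's GitHub repository to perform this rationalisation and final rational verification, at which point the inequality follows by standard flag-algebra unpacking.
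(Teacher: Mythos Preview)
Your proposal is correct and matches the paper's approach: the paper itself omits the proof, stating that it is ``basically the same'' as \cite[Proposition~3.3]{bodnar2025}, namely a computer-assisted flag-algebra SDP certificate in the theory of $2$-colored $3$-graphs with the local-maximality, near-regularity, and semi-bipartite-density hypotheses fed in as linear side constraints, exactly as you outline. The only point worth flagging is that the statement's ``$\{K_4^3,F_5\}$-free'' is evidently a copy-paste slip for ``$\{C_4^3,\overline{F_5}\}$-free'' (compare Propositions~\ref{prop:1} and~\ref{prop:2} and the way \cref{pr:3} is invoked in the proof of \cref{lem:key}); your SDP should be run in the latter theory.
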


\section{Proof of \cref{lem:key}}\label{sec:key}

We begin this section with two useful results that will assist in the proof of \cref{lem:key}, which is presented afterward.  
 
\begin{cor}[Proposition 2.6 in \cite{bodnar2025}]\label{pro:blowup-invariant}
 The following statements hold.
 \begin{enumerate}[(1)]
      \item For every $\delta \in\left(0,1 /6^3\right)$, there exists $n_0=n_0(\delta)$ such that every $\{C_4^3,\overline{F_5}\}$-free $3$-graph $H$ with $n \geq n_0$ vertices and at least $\left(\frac{\pi(C_4^3,\overline{F_5})}{6}-\delta\right) n^3$ edges satisfies $$\Delta(H) \leq\left(\frac{\pi(C_4^3,\overline{F_5})}{2}+10 \delta^{1 / 3}\right) n^2.$$

    \item Suppose that $H$ is an $\{C_4^3,\overline{F_5}\}$-free $3$-graph on $n$ vertices with exactly $\operatorname{ex}(n, \{C_4^3,\overline{F_5}\})$ edges. Then $\Delta(H)-\delta(H) \leq n-2$.   
 \end{enumerate}
\end{cor}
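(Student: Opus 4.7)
The plan is to prove both parts via a \emph{vertex-cloning} argument, exploiting a key feature of the forbidden family: in both $C_4^3 = K_4^3$ and $\overline{F_5}$ (whose edge set is $\{124,125,134,135,234,235,345\}$), every pair of vertices lies in at least one edge. Given this, if I clone a vertex $v \in V(H)$ to a twin $v'$ with the same link as $v$ but no edge containing both $v$ and $v'$, then no forbidden copy can use both twins (such a copy would force an edge through the twin pair), while any forbidden copy using $v'$ but not $v$ pulls back to a forbidden copy in $H$ via the relabeling $v' \mapsto v$. So the new graph remains $\{C_4^3,\overline{F_5}\}$-free, and by induction the same holds after any number of iterated cloning operations on the same vertex (the link of $v$ is unchanged by the addition of twins).

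For (1), I would argue by contradiction: suppose $\Delta(H) \ge (\beta/2 + \varepsilon) n^2$ where $\beta := \pi(C_4^3,\overline{F_5})$ and $\varepsilon := 10\delta^{1/3}$, and let $v^*$ achieve the maximum degree. I then iterate the cloning operation on $v^*$ a total of $k := \lfloor cn \rfloor$ times, for a small parameter $c>0$ to be optimized. The resulting $\{C_4^3,\overline{F_5}\}$-free graph $H^*$ has $n+k$ vertices and $|H| + k\Delta(H)$ edges, so
\[
\frac{6\,|H^*|}{(n+k)^3} \;\ge\; \frac{\beta - 6\delta + 3c\beta + 6c\varepsilon}{(1+c)^3} + o(1).
\]
Setting $c = \varepsilon/\beta$ and Taylor-expanding, the right-hand side exceeds $\beta + 3\varepsilon^2/\beta - 6\delta + O(\varepsilon^3)$. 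Since $\varepsilon^2 = 100\,\delta^{2/3} \gg \delta$ for small $\delta$, this strictly exceeds $\beta$, contradicting the definition of the Tur\'an density when $n$ is large.

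For (2), I would take vertices $u,w$ with $\deg_H(u)=\Delta(H)$ and $\deg_H(w)=\delta(H)$, and form $H'$ from $H$ by deleting $w$ and its incident edges and then cloning $u$ into a new vertex $w'$, so that $|V(H')|=n$. By the closure, $H'$ is again $\{C_4^3,\overline{F_5}\}$-free, and
\[
|H'| - |H| \;=\; \Delta(H) - \delta(H) - d_H(u,w) \;\ge\; \Delta(H) - \delta(H) - (n-2),
\]
since the codegree $d_H(u,w) \le n-2$. If $\Delta(H) - \delta(H) > n-2$ this gap is positive, contradicting the extremality of $H$.

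The main obstacle is establishing the cloning closure; once this is in hand, (1) becomes a routine Taylor optimization and (2) is a one-line codegree estimate. The closure rests on the combinatorial property ``every pair of vertices of each forbidden configuration lies in a common edge'', which is a genuine hypothesis on the family (it would fail, e.g., for a matching) but holds for $\{C_4^3,\overline{F_5}\}$ by direct enumeration. Note also that the exponent $1/3$ in (1) is generous: the analysis actually yields the stronger bound $\Delta(H) \le (\beta/2 + O(\delta^{1/2})) n^2$, with $\delta^{1/3}$ being the simpler form sufficient for the application to \cref{lem:key}.
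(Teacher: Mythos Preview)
Your proposal is correct. The paper does not give its own proof of this corollary but imports it verbatim from~\cite{bodnar2025}; the label \texttt{pro:blowup-invariant} already signals that the argument there is precisely the vertex-cloning (blowup-invariance) approach you describe, hinging on the fact that in each of $C_4^3=K_4^3$ and $\overline{F_5}$ every pair of vertices is covered by an edge, so that twins can never both occur in a forbidden copy.
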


\begin{fact}[\cite{bodnar2025}]\label{fact:low-degree}
    Let $\mathcal{F}$ be a family of $r$-graphs. For every $\delta > 0$ there exists $N_0$ such that the following holds for all $n \ge N_0$. Suppose that $H$ is an $\mathcal{F}$-free $r$-graph on $n$ vertices with $|H| \ge \left( \frac{\pi(\mathcal{F})}{r!} - \delta \right) n^r$. Then the set
    \begin{equation}\label{equ:Z_delta}
        Z_\delta(H) \coloneqq \left\{ v \in V(H) : d_{H}(v) \le \left( \frac{\pi(\mathcal{F})}{(r-1)!} - 4\delta^{1/2} \right) n^{r-1} \right\}
    \end{equation}
    has size at most $\delta^{1/2} n$.
\end{fact}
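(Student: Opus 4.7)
The plan is a vertex-deletion contradiction. Set $\pi := \pi(\mathcal{F})$ and suppose, toward contradiction, that $|Z_\delta(H)| > \delta^{1/2} n$. Pick any $z := \lceil \delta^{1/2} n \rceil$ vertices $v_1, \dots, v_z$ from $Z_\delta(H)$ and delete them sequentially, producing an $\mathcal{F}$-free subgraph $H'$ on $m := n - z$ vertices. The degree of each $v_i$ in any intermediate subgraph of $H$ is bounded above by $d_H(v_i) \le \bigl(\pi/(r-1)! - 4\delta^{1/2}\bigr) n^{r-1}$, so at most $z\bigl(\pi/(r-1)! - 4\delta^{1/2}\bigr) n^{r-1}$ edges are destroyed in total. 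Combined with $|H| \ge (\pi/r! - \delta) n^r$ and $z \le \delta^{1/2} n + 1$, this yields
$$|H'| \;\ge\; \frac{\pi}{r!}\, n^r \;-\; \frac{\pi}{(r-1)!}\,\delta^{1/2}\, n^r \;+\; 3\delta\, n^r \;-\; O_r(n^{r-1}),$$
where the surplus $3\delta n^r$ appears because the $4\delta^{1/2}$ saving on each deleted vertex contributes roughly $4\delta n^r$, of which $\delta n^r$ is used to absorb the initial slack in the hypothesis on $|H|$.

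Next I would upper-bound $|H'|$ via the Tur\'an density. By the definition of $\pi(\mathcal{F})$ as a limit, for $N_0 = N_0(\delta, r)$ large enough one has $\ex(N, \mathcal{F}) \le (\pi + \delta^{3/2})\,N^r/r!$ whenever $N \ge N_0(1 - \delta^{1/2})$. Using the binomial expansion $m^r = n^r(1-\delta^{1/2})^r = n^r - r\delta^{1/2} n^r + \binom{r}{2}\delta\, n^r + O_r(\delta^{3/2}) n^r$ together with the identity $r/r! = 1/(r-1)!$,
$$\ex(m, \mathcal{F}) \;\le\; \frac{\pi}{r!}\, n^r \;-\; \frac{\pi}{(r-1)!}\,\delta^{1/2}\, n^r \;+\; \frac{\pi}{2(r-2)!}\,\delta\, n^r \;+\; O_r(\delta^{3/2})\, n^r.$$
The two leading-order terms coincide with those in the lower bound on $|H'|$, and this exact cancellation is the entire purpose of the factor $4$ in the definition of $Z_\delta(H)$: a weaker threshold such as $\pi/(r-1)! - \delta^{1/2}$ would leave a residual $\delta^{1/2}n^r$ term that ruins the contradiction.

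Subtracting gives $|H'| - \ex(m, \mathcal{F}) \ge \bigl(3 - \pi/(2(r-2)!)\bigr)\delta\, n^r - O_r(\delta^{3/2})\, n^r - O_r(n^{r-1})$. Since $\pi \le 1$ and $(r-2)! \ge 1$, the leading constant is at least $5/2$, so the right-hand side is strictly positive once $\delta$ lies below a threshold depending only on $r$ and $n$ exceeds a threshold depending on $\delta$; this covers the entire nontrivial range $0 < \delta \le \pi/r!$, because the implicit constant in the $O_r(\delta^{3/2})$ term is controlled by $2^r/r!$, which is small for every $r \ge 2$. The resulting inequality $|H'| > \ex(m, \mathcal{F})$ contradicts the $\mathcal{F}$-freeness of $H'$. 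The main obstacle is purely bookkeeping: one must track the various lower-order error terms carefully enough that the leading-order cancellation in $\delta^{1/2} n^r$ survives the final subtraction, leaving an unambiguous first-order surplus of order $\delta n^r$.
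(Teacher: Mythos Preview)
The paper does not give its own proof of this statement: it is recorded as a Fact cited from~\cite{bodnar2025} and used as a black box in the proof of Lemma~\ref{lem:key}. So there is no in-paper argument to compare against.

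Your proposed proof is the standard vertex-deletion argument and is correct. The bookkeeping is sound: deleting $z \approx \delta^{1/2}n$ vertices of degree at most $\bigl(\tfrac{\pi}{(r-1)!}-4\delta^{1/2}\bigr)n^{r-1}$ leaves an $\mathcal{F}$-free subgraph with at least $\tfrac{\pi}{r!}n^r - \tfrac{\pi}{(r-1)!}\delta^{1/2}n^r + 3\delta n^r - O_r(n^{r-1})$ edges, while $\ex(m,\mathcal{F})$ with $m=n-z$ expands to the same two leading terms plus at most $\tfrac{\pi}{2(r-2)!}\delta n^r + O_r(\delta^{3/2})n^r$; since $\tfrac{\pi}{2(r-2)!}\le \tfrac12$, the residual $\ge \tfrac52\delta n^r$ beats the error terms for $n$ large. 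Two small remarks: your equality $m^r=n^r(1-\delta^{1/2})^r$ is only true up to $O_r(n^{r-1})$ because of the ceiling in $z$, but you already carry such a term; and the aside that $2^r/r!$ is ``small for every $r\ge 2$'' is a slight overstatement (it equals $2$ and $4/3$ for $r=2,3$), though the inequality you actually need, $\tfrac52\delta > C_r\delta^{3/2}$, still holds throughout the nontrivial range $\delta<1$. Neither affects the validity of the argument.
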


Combining the two preliminary results with Propositions~\ref{prop:1}, \ref{prop:2}, and \ref{pr:3}, we are now in a position to prove \cref{lem:key}.

\begin{proof}[The proof of \cref{lem:key}.]
   Fix a sufficiently small constant $\xi>0$. Let $\delta_{\ref{pr:3}}=$ $\delta_{\ref{pr:3}}(\xi)$ and $N_{\ref{pr:3}}=N_{\ref{pr:3}}(\xi)$ be the constants given by \cref{pr:3}. 
   Take $n$  large enough so that in particular $n \geq 2 N_{\ref{pr:3}}$. 
   Choose a sufficiently small $\delta>0$  depending on $\xi, \delta_{\ref{pr:3}}$ and $N_{\ref{pr:3}}$. 
   
   Let $\beta:=\pi(C_4^3, \overline{F_5})$.  
   Suppose that $H$ is an $n$-vertex $\{C_4^3, \overline{F_5}\}$-free $3$-graph with at least $\left(\frac{\beta}{6}-\delta\right) n^3$ edges. 
   Denote $V\coloneqq V(H)$, $Z\coloneqq Z_\delta(H)$ and $U\coloneqq V(H) \backslash Z$, where $Z_\delta(H)$ was defined in (\ref{equ:Z_delta}) with $\mathcal{F}=\{C_4^3, \overline{F_5}\}$. 
   
   Consider the induced subgraph $G$ of $H$ on the vertex set $U$, where  $\hat{n}\coloneqq |U|$. 
   By \cref{fact:low-degree}, we have $\hat{n}=n-|Z| \geq\left(1-\delta^{1 / 2}\right) n \geq N_{\ref{pr:3}}$. 
   By the definition of $Z_\delta(H)$, it follows that
   $$\delta(G) \geq\left(\frac{\beta}{2}-4 \delta^{1 / 2}\right) n^2-|Z| n \geq\left(\frac{\beta}{2}-5 \delta^{1 / 2}\right) n^2 \geq\left(\frac{\beta}{2}-5 \delta^{1 / 2}\right) \hat{n}^2.$$ 
   Applying \cref{pro:blowup-invariant} to $G$ yields 
   $$\Delta(G) \leq\left(\frac{\beta}{2}+10\left(5 \delta^{1 / 2}\right)^{1 / 3}\right) \hat{n}^2.$$ 
   Therefore, 
   \begin{equation}\label{equ:5.1}
   \Delta(G)-\delta(G) \leq\left(\frac{\beta}{2}+10\left(5 \delta^{1 / 2}\right)^{1 / 3}\right) \hat{n}^2-\left(\frac{\beta}{2}-5 \delta^{1 / 2}\right) \hat{n}^2 \leq \delta_{\ref{pr:3}} \hat{n}^2.
   \end{equation}
   
   Let $V(G)=U_1 \cup U_2$ be a partition maximizing $\left|G \cap \mathbb{B}\left[U_1, U_2\right]\right|$, and set $x_i:=\left|U_i\right| / \hat{n}$ for $i \in[2]$. 
   Such a partition is maximal and hence locally maximal.
   Applying \cref{prop:2} to $G$, we obtain 
   \begin{equation}\label{equ:5.2}
   \left|G \cap \mathbb{B}\left[U_1, U_2\right]\right| = \left|G\left[U_1, U_2\right]\right| \geq \alpha_{\ref{prop:2}} \frac{\hat{n}^3}{6}, 
   \end{equation}
   noting that $\beta \geq 2\sqrt{3}-3$. 
   By combining (\ref{equ:5.2}) with the trivial upper bound $$\left|G\left[U_1, U_2\right]\right| \leq\binom{\left|U_1\right|}{2}\left|U_2\right| \leq \frac{x_1^2\left(1-x_1\right) \hat{n}^3}{2},$$ and performing exact calculations using rational numbers via computer, we deduce that 
   $\frac{52}{100} \leq x_1 \leq \frac{79}{100}$, that is, $0.52 \hat{n} \leq\left|U_1\right| \leq 0.79 \hat{n}$.
   
   Let $V(H)=$ $V_1 \cup V_2$ be an arbitrary partition satisfying $U_1 \subseteq V_1$ and $U_2 \subseteq V_2$. 
   Then we have 
   $$\begin{aligned}
   & \left|V_1\right| \geq\left|U_1\right| \geq 0.52 \hat{n} \geq 0.52\left(1-\delta^{1 / 2}\right) n \geq \frac{n}{2}, \quad \text { and } \\
   & \left|V_1\right| \leq\left|U_1\right|+|Z| \leq 0.78 n+\delta^{1 / 2} n \leq \frac{4 n}{5}.
   \end{aligned}$$ 
   Recall that $B=B_{H}\left(V_1, V_2\right)$ and $M=M_{H}\left(V_1, V_2\right)$. 
   By the definition of $B$ and $M$, we have $$|H|=\left|H\left[V_1, V_2\right]\right|+|B|+\left|H\left[V_2\right]\right|=\binom{\left|V_1\right|}{2}\left|V_2\right|-|M|+|B|+\left|H\left[V_2\right]\right|.$$ 
   Therefore, to prove \cref{equ:keylem}, it suffices to show that \begin{equation}\label{equ:5.3}
   |B|-|M| \leq \xi n^3-\max \left\{\frac{|B|}{3999}, \frac{|M|}{4000}\right\}. 
   \end{equation} 
   
   Let $\hat{B}:=B_{G}\left[U_1, U_2\right]$ and $\hat{M}:=M_{G}\left[U_1, U_2\right]$. Note that $\hat{B} \subseteq B$ and $\hat{M} \subseteq M$. Moreover,  we have 
   $$\begin{aligned}
   & |B| \leq|\hat{B}|+|Z| \cdot\binom{n}{2} \leq|\hat{B}|+\frac{\delta^{1 / 2} n^3}{2} \leq|\hat{B}|+\frac{\xi n^3}{10}, \quad \text { and } \\
   & |M| \leq|\hat{M}|+|Z| \cdot\binom{n}{2} \leq|\hat{M}|+\frac{\delta^{1 / 2} n^3}{2} \leq|\hat{M}|+\frac{\xi n^3}{10}. 
   \end{aligned}$$ 
   Hence, inequality (\ref{equ:5.3}) reduces to showing \begin{equation}\label{equ:5.4}
   |\hat{B}|-|\hat{M}| \leq \frac{\xi n^3}{2}-\max \left\{\frac{|\hat{B}|}{3999}, \frac{|\hat{M}|}{4000}\right\}. 
   \end{equation}
   
   Notice that $G$ satisfies (\ref{equ:5.1}) and (\ref{equ:5.2}), and the partition $U_1 \cup U_2=V(G)$ is locally maximal. So applying \cref{pr:3} to $G$ with parameter $\xi/4$, we obtain $|\hat{B}| \leq$ $\frac{3999}{4000}|\hat{M}|+\frac{\xi \hat{n}^3}{4}$. Consequently, 
   $$\begin{aligned} 
   & |\hat{B}|-|\hat{M}|=|\hat{B}|-\frac{3999}{4000}|\hat{M}|-\frac{|\hat{M}|}{4000} \leq \frac{\xi \hat{n}^3}{4}-\frac{|\hat{M}|}{4000}, \quad \text { and } \\
   & |\hat{B}|-|\hat{M}|=\frac{4000}{3999}\left(|\hat{B}|-\frac{3999}{4000}|\hat{M}|\right)-\frac{|\hat{B}|}{3999} \leq \frac{1000 \xi \hat{n}^3}{3999}-\frac{|\hat{B}|}{3999}.
   \end{aligned}$$
   This implies (\ref{equ:5.4}), completing the proof of \cref{lem:key}.
\end{proof}

\section{Non-principal families}\label{sec:other results}

A natural question is whether the Tur\'{a}n density of a family consisting of two 3-graphs is less than the minimum of the Tur\'{a}n densities of the individual $3$-graphs. For graphs, this is not true. Mubayi and R\"odl \cite {Mubayi_Rodl_2002} conjectured that this phenomenon does not carry over to hypergraphs and therefore posed the following conjecture. 

\begin{conj}[\cite {Mubayi_Rodl_2002}]\label{conj:non}
  There is a positive integer $k$ and $3$-graphs $F_1,\dots,F_k$ such that $\pi(\{F_1,\dots,F_k\}) < \min_i \pi(F_i)$.   
\end{conj}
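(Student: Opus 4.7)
My plan is to confirm the conjecture by exhibiting explicit non-principal families, following the framework established for \cref{thm:main}. Guided by the abstract, the two candidates are $\{F_{3,2}, C_5^{3-}\}$ with joint Tur\'an density $2/9$, and $\{F_{3,2}, \text{induced } \overline{F_{3,2}}\}$ with joint Tur\'an density $3/8$. Using the known value $\pi(C_5^{3-}) = 1/4$ (cited in \cref{introduction}) and the value of $\pi(F_{3,2})$ from the Mubayi--R\"odl paper (which strictly exceeds the joint targets), both pairs realize the strict inequality $\pi(\mathcal{F}) < \min_i \pi(F_i)$ demanded by the conjecture once the joint densities are pinned down.

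For each family the strategy splits into a lower-bound construction and a matching upper bound. The lower bound comes from exhibiting an explicit $\mathcal{F}$-free 3-graph whose edge density matches the target: for density $2/9$, a balanced $3$-partite (or iterated) blow-up of a small 3-graph is the natural candidate, since tight cycles of length $\ell \not\equiv 0 \pmod{3}$ cannot be embedded into a 3-partition (an easy parity argument on the parts of any edge in a tight cycle), making the construction automatically $C_5^{3-}$-free; one then chooses the underlying template so as to forbid $F_{3,2}$ as well. For density $3/8$, a construction based on a balanced bipartition of the vertex set is the natural candidate, the precise semi-bipartite or mixed structure being chosen so as to avoid both $F_{3,2}$ and its induced complement by direct case analysis on the intersection pattern of edges with the two parts.

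The matching upper bounds are the main obstacle. I would follow the two-step plan used for \cref{thm:main}: first, run the flag algebra SDP pipeline of \cref{sec:Flag}, via Razborov's framework and the SageMath package of Bodn\'ar, to obtain an upper bound close to the rational target; second, establish a stability-type structural lemma in the spirit of \cref{lem:key}, showing that any sufficiently dense 3-graph avoiding the family admits a partition approximating the extremal construction, with the bad and missing edges bounded by an inequality analogous to \eqref{eq:BM} that is itself certifiable by flag algebra. An induction on the smaller side of the partition, exactly as in the proof of \cref{thm:main}, then closes the gap between the SDP bound and the exact density. The delicate part will be engineering the flag size and auxiliary constraints so that the SDP certificate recovers the rational targets $2/9$ and $3/8$ exactly rather than nearby irrational bounds; this is the standard difficulty in exact flag algebra proofs and typically requires some experimentation with the choice of flag types and of the stability partition used in the structural lemma.
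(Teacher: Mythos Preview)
Your overall plan---exhibit the pairs $\{F_{3,2}, C_5^{3-}\}$ and $\{F_{3,2}, \text{induced }\overline{F_{3,2}}\}$---matches the paper, but your execution diverges from it in two important ways, one of which is a genuine error.

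\textbf{Upper bounds.} You propose to mimic the proof of \cref{thm:main}: an SDP bound close to the target, then a stability lemma and an induction to close the gap. The paper does nothing of the sort for these two families; a single flag algebra computation already certifies the exact rational values $2/9$ and $3/8$. The elaborate machinery of \cref{lem:key} was necessary for \cref{thm:main} precisely because the target $2\sqrt{3}-3$ is irrational and cannot be hit exactly by an SDP over the rationals. For rational targets the SDP can (and here does) return the exact bound, so your proposed second step is unnecessary. Moreover, the induction in \cref{thm:main} exploits the \emph{recursive} nature of the $\mathbb{B}_{\text{rec}}$-construction; the extremal constructions for $2/9$ and $3/8$ are not recursive, so it is not clear your inductive scheme would even be applicable without substantial modification.

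\textbf{Lower bound for $3/8$.} Your suggestion of ``a construction based on a balanced bipartition'' fails. The complete semi-bipartite $3$-graph $\mathbb{B}[V_1,V_2]$ with $|V_1|=|V_2|=n/2$ does achieve density $3/8$ and is $F_{3,2}$-free, but it is \emph{not} induced-$\overline{F_{3,2}}$-free: any three vertices of $V_1$ together with any two vertices of $V_2$ induce exactly $\overline{F_{3,2}}$ (indeed, $\overline{F_{3,2}}$ \emph{is} the complete semi-bipartite $3$-graph on parts of sizes $3$ and $2$). The paper's construction is the balanced blow-up of $K_4^3$, which also has density $3/8$; it avoids $F_{3,2}$ because $F_{3,2}$ has five vertices but no two can share a part, and it avoids induced $\overline{F_{3,2}}$ by a short case check. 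Your lower bound for $2/9$ via the balanced complete $3$-partite $3$-graph is correct and matches the paper.
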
 

Balogh \cite{Balogh_2002} proved the conjecture, calling this phenomenon the 
\emph{non-principality} of Tur\'an function. Unfortunately, the family in Balogh \cite{Balogh_2002} has many members-significantly more than two. Mubayi and Pikhurko \cite{Mubayi_Oleg_2008}  subsequently built upon Balogh's ideas and provided the first example of a non-principal pair of 3-graphs. Later, Razborov \cite{Raz10} used the semi-definite method to show that the pair $\left(C_4^{3-}, C_5^3\right)$ is also a non-principal pair.

In this section, we exhibit a non-principal pair  for $3$-graph. Let $F_{3,2} = ([5], \{123,145,245,345\})$.



\begin{theorem}\label{thm:C5-}
   $\pi(F_{3,2}, C_5^{3-}) = \frac{2}{9} < \min \{\pi(F_{3,2}),\pi(C_5^{3-})\}.$ 
\end{theorem}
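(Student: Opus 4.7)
The plan is to prove $\pi(F_{3,2}, C_5^{3-}) = 2/9$ by establishing matching lower and upper bounds, then verifying the non-principality inequality $2/9 < \min\{\pi(F_{3,2}), \pi(C_5^{3-})\}$ via explicit constructions. For the lower bound I would use the balanced complete $3$-partite $3$-graph $T(n)$ on parts of size $\lfloor n/3\rfloor$ or $\lceil n/3\rceil$, whose edge set consists of all transversal triples. This graph has $(1+o(1))(2/9)\binom{n}{3}$ edges, and is $\{F_{3,2}, C_5^{3-}\}$-free because neither forbidden $3$-graph admits a rainbow $3$-coloring. For $F_{3,2}$: vertices 4 and 5 (shared by the three edges 145, 245, 345) must receive distinct colors, forcing 1, 2, 3 to share the third color, and making edge 123 monochromatic. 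For $C_5^{3-} = \{123, 234, 345, 451\}$: propagating the rainbow constraint gives $c(4) = c(1)$ from edge 234 and then $c(5) = c(2)$ from edge 345, so edge 451 has $c(4) = c(1)$, a contradiction.

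For the upper bound $\pi(F_{3,2}, C_5^{3-}) \le 2/9$, I would adapt the flag algebra plus stability framework of \cref{sec:Flag} and \cref{sec:key}. First, a flag algebra computation analogous to \cref{prop:1} would certify that every $\{F_{3,2}, C_5^{3-}\}$-free $n$-vertex $3$-graph has at most $(2/9)\binom{n}{3} + o(n^3)$ edges. Next, a stability step would leverage the key structural consequence of $F_{3,2}$-freeness: for every pair $\{u, v\}$, the link $L_H(u,v)$ is an independent set of $H$, i.e., contains no triple of $H$. Combined with $C_5^{3-}$-freeness (which rules out the characteristic tight $4$-path configurations), this should force near-extremal $H$ to admit a tripartition $V(H) = V_1 \cup V_2 \cup V_3$ with $o(n^3)$ non-transversal edges and $o(n^3)$ missing transversal triples. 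A Lagrangian calculation maximizing $6\,x_1 x_2 x_3$ over $x_1+x_2+x_3=1$ then pins the extremal density at $2/9$, attained only at $(1/3,1/3,1/3)$. The main obstacle I anticipate is producing a flag algebra certificate that is tight at exactly the rational value $2/9$, since a clean rounding is required to seed the stability step in the style of \cref{lem:key}.

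For the strict inequality, $\pi(C_5^{3-}) = 1/4 > 2/9$ is already recorded in the introduction. To show $\pi(F_{3,2}) > 2/9$, I would use the complete semi-bipartite $3$-graph $\mathbb{B}[A,B]$ with $|A| = |B| = n/2$, whose density tends to $3/8$. It is $F_{3,2}$-free: for any pair $\{u,v\}$, the link $L(u,v)$ is either $B$ (when $u,v \in A$), $A\setminus\{u\}$ (when one of $u, v$ lies in each part), or empty; in every case the link is contained in a single part, so any three link vertices span no edge of $\mathbb{B}[A,B]$. Hence $\pi(F_{3,2}) \ge 3/8 > 2/9$, closing the non-principality claim.
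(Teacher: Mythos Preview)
Your proposal is correct and its core matches the paper: the lower bound via the balanced complete $3$-partite $3$-graph and the upper bound via a flag algebra computation are exactly what the paper does. Two points of comparison are worth noting.

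First, the stability/Lagrangian layer you sketch for the upper bound is unnecessary here. The machinery of \cref{lem:key} and \cref{sec:key} is needed in the paper only because the target $2\sqrt{3}-3$ is irrational and the extremal $\mathbb{B}_{\mathrm{rec}}$-construction is recursive, so no single SDP round can land exactly on it. For $\{F_{3,2},C_5^{3-}\}$ the target $2/9$ is rational and the extremal configuration is a single blow-up, so the flag algebra SDP rounds to a sharp certificate directly; your ``first step'' is already the entire upper-bound proof, and the paper stops there. Your concern about ``clean rounding'' is thus not an obstacle in this case.

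Second, for the strict inequality the paper simply quotes the known exact values $\pi(F_{3,2})=4/9$ (F\"uredi--Pikhurko--Simonovits) and $\pi(C_5^{3-})=1/4$, rather than building an ad hoc lower-bound construction. Your semi-bipartite argument giving $\pi(F_{3,2})\ge 3/8$ is a valid self-contained substitute (and your link analysis is correct), though note that the optimal semi-bipartite ratio is $|A|:|B|=2:1$, which recovers $4/9$.
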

\begin{proof}{}{}
    The upper bound follows from a flag algebra calculation. 
    The lower bound is given by the balanced complete $3$-partite $3$-graph $H$. 
    Since none of $F_{3,2}$, $C_5^{3-}$ is $3$-partite, $H$ is $\{F_{3,2}, C_5^{3-}\}$-free. 

    Note that $\pi(C_5^{3-})=1/ 4$ \cite{LMP24,XZLiu_2024} and F\"uredi, Pikurkho and Simonovits \cite{Furedi_Oled_Simonovits_2005} determined $\pi\left(F_{3,2}\right)=4 / 9$. Combining these facts yields 
    $$
    \pi(F_{3,2}, C_5^{3-}) = \frac{2}{9} <\min \{\pi(F_{3,2}),\pi(C_5^{3-})\}.
    $$
\end{proof} 

It is known that for every non-complete $3$-graph $F$, the Tur\'an density of induced $F$ is trivial equal to $1$.  Consequently, for every non-complete $3$-graph $F$, we have $$\pi(F, \induced \ \overline{F}) \leq \pi(F).$$ 
In particular, if $F\subseteq \overline{F}$, then the bound is tight: $\pi(F, \induced \ \overline{F}) = \pi(F)$.  However, strict inequality can occur.  Frankl and F\"uredi \cite{Frankl_Furedi_1984} showed that $\pi(C_4^{3-},\induced \ \overline{C_4^{3-}})=\frac{5}{18}< \pi(C^{3-}_4)$,
providing a concrete example where forbidding both $F$ and its induced complement leads to a lower density. In a similar spirit, we establish a new non-principal pair $\pi(F_{3,2}, \induced \ \overline{F_{3,2}})$, which highlights another instance of such a phenomenon.

\begin{theorem}\label{thm:induced}
   $\pi(F_{3,2}, \induced \ \overline{F_{3,2}}) = \frac{3}{8} < \frac{4}{9} = \pi(F_{3,2}).$ 
\end{theorem}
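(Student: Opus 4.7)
The plan is to establish $\pi(F_{3,2},\induced \ \overline{F_{3,2}})=3/8$ by matching upper and lower bounds, after which the strict inequality $3/8<4/9$ follows from $\pi(F_{3,2})=4/9$ (F\"uredi--Pikhurko--Simonovits, already cited in the paper).

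For the lower bound, the natural extremal candidate is the balanced blow-up of $K_4^3$: let $H_n$ be the complete $4$-partite $3$-graph on $n$ vertices with parts $V_1,V_2,V_3,V_4$ of nearly equal size, whose edge set consists of all triples hitting three distinct parts. A direct count gives $|H_n|=\sum_{i<j<k}|V_i||V_j||V_k|=(3/8+o(1))\binom{n}{3}$. For $F_{3,2}$-freeness: in any hypothetical embedding of $F_{3,2}=\{123,145,245,345\}$, the edge $123$ forces $1,2,3$ into three pairwise distinct parts, while the edges $145,245,345$ together force both $4$ and $5$ into parts distinct from each other and from each of the three parts containing $1,2,3$; this demands a fifth part, contradicting that only four are available. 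For induced-$\overline{F_{3,2}}$-freeness: enumerate the distributions $(a_1,a_2,a_3,a_4)$ with $a_1+\cdots+a_4=5$ of any $5$ vertices across the $4$ parts; up to permutation these are $(5,0,0,0)$, $(4,1,0,0)$, $(3,2,0,0)$, $(3,1,1,0)$, $(2,2,1,0)$, $(2,1,1,1)$, and the number of induced edges equals $\sum_{i<j<k}a_ia_ja_k$, which takes values $0,0,0,3,4,7$ respectively. Since $\overline{F_{3,2}}$ has exactly $6$ edges, no $5$-vertex subset can induce it.

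For the upper bound $\pi(F_{3,2},\induced \ \overline{F_{3,2}})\le 3/8$, I would invoke the flag-algebra framework of \cref{sec:Flag}, paralleling the one-line proof of the upper bound in \cref{thm:C5-}. Working within the theory of $\{F_{3,2},\induced \ \overline{F_{3,2}}\}$-free $3$-graphs, the plan is to set up a semi-definite program on $0$-flags of a moderate size $m$ (say $m=7$ or $8$), solve it using the SageMath package referenced in the paper, and then round the numerical solution to a rational SOS certificate whose target value is exactly $3/8$. Since the putative extremal construction is a single fixed blow-up of $K_4^3$ (rather than a family indexed by a parameter, as in \cref{thm:main}), one expects a clean exact certificate without needing any stability or perturbation analysis downstream.

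The main obstacle is this last rounding step: one must pick $m$ and the collection of types large enough that the SDP optimum already equals $3/8$ (not merely approximately so), and that the rational certificate is numerically stable. In practice this is the same computational hurdle as for \cref{thm:C5-} and is handled by iterating over flag sizes and type sets until a clean certificate appears; the combinatorial lower bound and the concluding comparison $3/8<4/9$ are then immediate, yielding both the value and the non-principality of the pair.
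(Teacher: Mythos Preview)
Your proposal is correct and follows essentially the same approach as the paper: the balanced blow-up of $K_4^3$ for the lower bound and a flag-algebra certificate for the upper bound, with the strict inequality coming from the known value $\pi(F_{3,2})=4/9$. Your enumeration of the $5$-vertex edge counts and your $F_{3,2}$-freeness argument simply make explicit what the paper records as ``easy to check.''
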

\begin{proof}{}{}
    The upper bound again follows from a flag algebra calculation. 
    The lower bound is achieved by $H$ taking balanced blow-up of $K_4^3$. It is easy to check that $H$ is induced $\overline{F_{3,2}}$-free. To see that $H$ is also $F_{3,2}$-free, observe that a copy of $F_{3,2}$ cannot involve two vertices lying in the same part of $H$, while $H$ has only $4$ parts and $F_{3,2}$ has $5$ vertices. Thus, such a configuration cannot exist in  $H$. Furthermore, Mubayi and R\"odl~\cite{Mubayi_Rodl_2002} proved that $\pi(F_{3,2}) = \frac{4}{9}$, which completes the proof.  
\end{proof}

Motivated by \cref{thm:induced} and \cref{conj:non}, it is natural to raise the following question:
\begin{question}
    Is it true that for every non-complete $3$-graph $F$ with $F\not\subseteq \overline{F}$, the inequality $\pi(F, \induced \ \overline{F}) < \pi(F)$ holds? 
    If not, which classes of $3$-graphs satisfy this property?
\end{question}

\section*{Acknowledgement}
The authors would like to thank Xizhi Liu for for his technical assistance with the implementation and execution of our code.

\bibliographystyle{abbrv}
\bibliography{Turandensity}
\end{document}